\newtheorem{thm}[]{Theorem}[section]
\newtheorem{lem}{Lemma}[section]
\newtheorem{proposition}{Proposition}[section]
\newtheorem{conjecture}{Conjecture}[section]
\newtheorem*{remark}{Remark}
\newtheorem{example}{Example}[section]
\title{A ``supernormal'' partition statistic}
\author{Madeline Locus Dawsey, Matthew Just, and Robert Schneider}
\address{Department of Mathematics\newline
University of Texas at Tyler\newline
Tyler, Texas 75799, U.S.A.}
\email{mdawsey@uttyler.edu}
\address{Department of Mathematics\newline
Emory University\newline
Atlanta, Georgia 30322, U.S.A.}
\email{mrjust@emory.edu}
\address{Department of Mathematics\newline
University of Georgia\newline
Athens, Georgia 30602, U.S.A.}
\email{robert.schneider@uga.edu}
\begin{document}

\maketitle

\begin{abstract}
We study a bijective map from integer partitions to the prime factorizations of integers that we call the ``supernorm'' of a partition, in which the multiplicities of the parts of partitions are mapped to the multiplicities of prime factors of natural numbers. 
The supernorm is connected to a family of maps we define, which suggests the potential to apply techniques from partition theory to identify and prove multiplicative properties of integers. We make a brief study of pertinent analytic aspects of the supernorm. Then, as an application of ``supernormal'' mappings (i.e., pertaining to the supernorm statistic), we prove an analogue of a formula of Kural-McDonald-Sah to give arithmetic densities of subsets of $\mathbb N$ instead of natural densities in $\mathbb P$ like previous formulas of this type; this builds on works of Alladi, Ono, Wagner, and the first and third authors. Finally, using a table of ``supernormal'' additive-multiplicative correspondences, we conjecture Abelian-type formulas that specialize to our main theorem and other known results. 
\\
  
\noindent {\it Keywords:} integer partitions; arithmetic density; Abelian theorem.

\end{abstract}  



\section{Introduction: the supernorm statistic on partitions}\label{sect1}

In this paper, we study a statistic on integer partitions that we call the ``supernorm'', by way of which certain combinatorial and set-theoretic aspects of partition theory are mapped to the prime factorizations of integers. 

Partitions are finite multisets of natural numbers which ripple with interesting and useful set-theoretic properties. 
Let $\mathcal P$ denote the set of all integer partitions, including the empty partition $\emptyset$. We denote a generic partition by $\lambda=(\lambda_1, \lambda_2, \lambda_3, \dots, \lambda_r), \lambda_1\geq \lambda_2 \geq \dots \geq \lambda_r\geq 1$. There are a number of natural statistics on 
partitions that are of interest in number theory and combinatorics, such as the {\it size} $|\lambda|=\sum_{1\leq i \leq r}\lambda_i$ (sum of parts) and the related {\it partition function} $p(n)$ that counts the number of size-$n$ partitions, $n\geq 0$, with $|\emptyset|:=0,\   p(0):=1$; the {\it length} $\ell(\lambda)$ (number of parts) with $\ell(\emptyset):=0$; the {\it largest part} $\text{lg}(\lambda)=\lambda_1$ and {\it smallest part} $\text{sm}(\lambda)=\lambda_r$ of the partition; the {\it multiplicity} $m_i=m_i(\lambda)\geq 0$ (frequency) of the part $i\geq 1$ in $\lambda$; along with Dyson's rank, the Andrews-Garvan crank, and other well-known maps from the set of partitions $\mathcal P$ to $\mathbb N$. 
We also write partitions in ``multiplicity superscript'' notation 
$\lambda = \left<1^{m_1}, 2^{m_2}, 3^{m_3},\dots, i^{m_i},\dots \right>$ where at most finitely many $m_i\geq 0$ are nonzero (see \cite{A88} for more about partitions). 

Striking patterns abound in $\mathcal P$ much as in $\mathbb N$, and in the values of these partition statistics, one observes arithmetic properties which often require subtle methods to prove. 
In the pair of papers \cite{Robert_zeta, Robert_bracket} and subsequent works, the third author outlined a {\it multiplicative} theory of integer partitions. This additive-multiplicative theory contains a number of further partition statistics as direct analogues of classical arithmetic functions, such as an analogue of the M\"{o}bius function $\mu(n)$, viz.
\begin{equation}\label{mobius}
\mu_{\mathcal P}(\lambda)= \left\{
        \begin{array}{ll}
            0 & \text{if $\lambda$ has any part repeated,}\\
           (-1)^{\ell(\lambda)} & \text{otherwise},
        \end{array}
    \right.
\end{equation}
as well as partition analogues of the Euler phi function and other arithmetic functions, and a partition generalization of the theory of Dirichlet series. The theory is equally closely aligned with the theory of $q$-series (see \cite{Andrews_q, Berndt_q, Robert_bracket}). 

The focus in these works shifts somewhat from the usual size statistic to a multiplicative partition statistic called the {\it norm} $N(\lambda)$ (product of parts):
\begin{equation}\label{normdef}
N(\lambda)\  :=\  \lambda_1  \lambda_2  \lambda_3 \cdots  \lambda_r\  =\  1^{m_1} 2^{m_2} 3^{m_3} \cdots  i^{m_i}\cdots \in \mathbb N, 
\end{equation}
where $N(\emptyset):=1$ (it is an empty product).\footnote{For more in-depth treatments of the partition norm, see \cite{KumarRana_supernorm, Robert_bracket, SS_norm}. Partitions of fixed norm $n$ are called multiplicative partitions of $n$ (or factorizations of $n$) in the literature; these were apparently first studied by MacMahon \cite{MacMahon}. Following Oppenheim \cite{Oppenheim}, the study of these objects is sometimes referred to as ``{\it factorisatio numerorum}''. 
} 
Furthermore, defined in \cite{Robert_bracket} is a {\it partition multiplication} operation $\lambda \cdot \lambda'$ for $\lambda, \lambda' \in \mathcal P$, which is concatenation (multiset union) of the parts of $\lambda$ and $\lambda'$.  For example, if $\lambda=(3,1,1)$ and $\lambda'=(4,3,2)$, then $\lambda\cdot \lambda'=(4,3,3,2,1,1).$ This leads also to partition division and a theory of partition divisors, or {\it subpartitions}, analogous to divisors in arithmetic \cite{Robert_bracket}. 

The guiding principle behind this multiplicative theory of partitions is: {\it phenomena in classical multiplicative number theory are special cases of partition-theoretic structures.} They can be thought of as prototypes for more general theorems about partitions; conversely, known facts about partitions suggest analogous facts about numbers. 


Here we study a natural multiplicative partition statistic resembling the norm $N(\lambda)$ in \eqref{normdef} that we call the {\it supernorm} $\widehat{N}(\lambda)$: 
 \begin{equation}\label{supernormdef}
 \widehat{N}(\lambda)\  :=\  p_{\lambda_1}p_{\lambda_2}\cdots p_{\lambda_r} \  =\  2^{m_1} 3^{m_2} 5^{m_3} \cdots p_i^{m_i}\cdots \in \mathbb N,\end{equation}
where $p_i\in \mathbb P$ denotes the $i$th prime number, $i\geq 1$, and $m_i=m_i(\lambda)\geq 0$ is the multiplicity of the part $i$ in the partition $\lambda$; we define $\widehat{N}(\emptyset):=1$.\footnote{The supernorm is referred to as the {\it Heinz number} of $\lambda$ in the OEIS \cite{heinz}; Kumar gives an analytic treatment of prime partitions in \cite{KumarRana_supernorm2} that may be used to study aspects of $\widehat{N}$.}

\begin{proposition}\label{prop0}
The supernorm map $\widehat{{N}}:\mathcal P \to \mathbb  N$ 
is an isomorphism between the monoid $(\mathcal P, \  \cdot\  )$ with partition multiplication and the monoid $(\mathbb N, \  \cdot\  )$ with integer multiplication.
\end{proposition}

\begin{proof}
Let us denote the {\it index} (subscript) of the $i$th prime $p_i$ by  the function $\operatorname{idx}(p_i):=i$, and define $\operatorname{idx}(1):=0$. \footnote{Converting partition parts to indices of variables is a key technique of Sills in \cite{Sills_Macmahon}.}  

It is immediate from the fundamental theorem of arithmetic that $\widehat{N}(\lambda)$ is the unique integer whose prime factors' indices are the parts of $\lambda\in \mathcal P$. Clearly, the map $\widehat{N}:\mathcal P \to \mathbb N$ is reversible. Let us denote by the {\it pre-partition} $\widehat{P}(n)\in \mathcal P$ of $n\in \mathbb N$ the partition $\lambda$ whose supernorm $\widehat{N}(\lambda)$ is equal to $n=2^{a_1}3^{a_2}5^{a_3}\cdots p_i^{a_i}\cdots, a_i\geq 0$:
\begin{equation}\label{prepartitiondef}
 \widehat{P}(n):=\left<1^{a_1}, 2^{a_2}, 3^{a_3}, \dots, i^{a_i},  \dots \right>\in \mathcal P. \end{equation}
 The uniqueness of the prime factorization of $n$, which yields a unique multiset of prime factor indices, guarantees the uniqueness of the pre-partition $\lambda=\widehat{P}(n)$ whose parts are identically this multiset of indices. 
Thus $\widehat{N}$ is a bijective map, and we have  
$$\left( \widehat{P}\circ \widehat{N}\right)(\lambda)=\lambda,\  \  \  \  \  \  \  \  \left( \widehat{N}\circ \widehat{P}\right)(n)=n.$$
Now, the sets $\mathcal P$ and $\mathbb N$ are monoids under their respective multiplication operations. Observe for $\lambda,\lambda'\in \mathcal P$ that $\widehat{N}(\lambda\cdot \lambda')=\widehat{N}(\lambda)\widehat{N}(\lambda')$ with $\widehat{N}(\emptyset)=1$. Similarly, if we set $n:=\widehat{N}(\lambda), n':=\widehat{N}(\lambda')$, then $\widehat{P}(nn')=\widehat{P}(n)\widehat{P}(n')=\lambda\cdot \lambda'$ with $\widehat{P}(1)=\emptyset$, which is the multiplicative identity in $\mathcal P$. The bijection preserves multiplication; thus $\widehat{N}$ is an isomorphism of monoids.
\end{proof}

 Throughout this paper, we shall use the term ``supernormal'' in the sense ``pertaining to the supernorm''. 
 
 The ``supernormal'' 
isomorphism between partitions in $\mathcal P$ and prime factorizations in $\mathbb N$ can be observed immediately from the two lattices in Figure \ref{figure1}. 
Note that the sequence of multiplicities $(m_1,m_2,m_3,\dots)$ associated to the parts of $\lambda = \left<1^{m_1}, 2^{m_2}, 3^{m_3},\dots, i^{m_i},\dots \right>$ is an element of the partially ordered set $\bigoplus_{i\geq 1} \mathbb{N}$, the set of sequences of non-negative integers where only finitely many terms are nonzero. One can establish a partial ordering on the multiplicities by taking $(m_1,m_2,\ldots)\leq (n_1,n_2,\ldots)$ if and only if $m_i\leq n_i$ for every $i\geq 1$. Thus we can form a lattice from the set $\mathcal P$ where the ordering is multiset inclusion (partition divisibility in the sense of \cite{Robert_bracket}); this is a key insight of Andrews' theory of partition ideals. 

This partition lattice is then isomorphic to the lattice of the positive integers ordered by divisibility.
\footnote{The second author presented a preliminary overview of Fig. \ref{figure1} at \cite{MAAIM}.}
The lattice bijection visible in Figure \ref{figure1} unifies the map of Alladi-Erd\H{o}s on partitions into prime parts \cite{AE77}, the lattice-theoretic backdrop of Andrews' theory of partition ideals \cite{A88}, and aspects of the additive-multiplicative theory in \cite{Robert_bracket}.

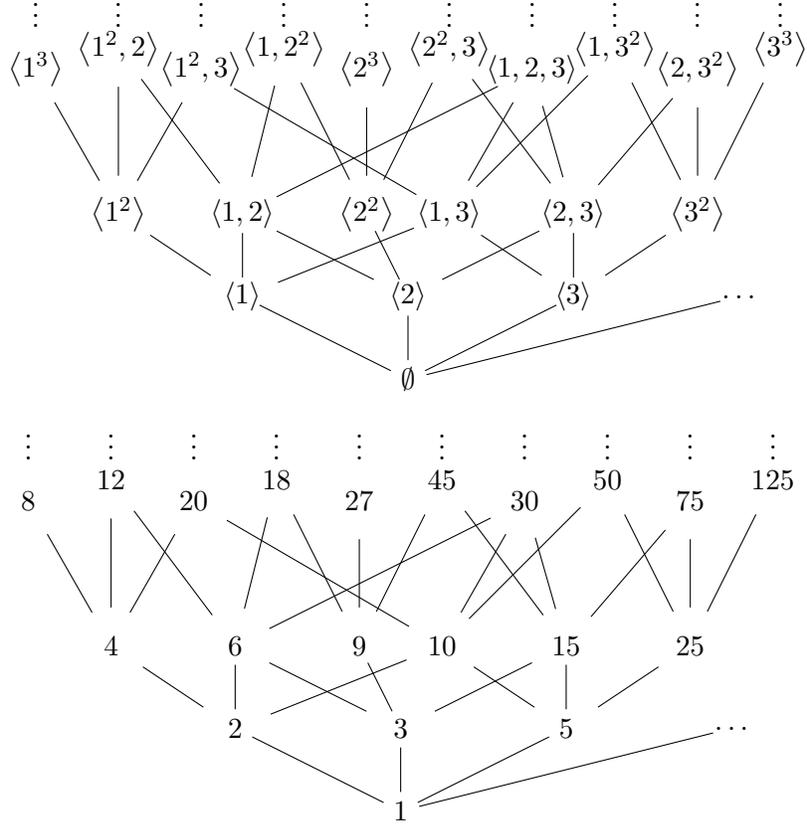
\begin{figure}\label{figure1}
\begin{center}
	    \begin{tikzpicture}
	        \begin{scope}[scale=1.1]
	            \node at (0,0){$\emptyset$};

	            \node at (-2,1){$\left<1\right>$};
	            \node at (0,1){$\left<2\right>$};
	            \node at (2,1){$\left<3\right>$};

	            \node at (-3.5,2){$\left<1^2\right>$};
	            \node at (-2,2){$\left<1,2\right>$};
	            \node at (-.5,2){$\left<2^2\right>$};
	            \node at (.5,2){$\left<1,3\right>$};
	            \node at (2,2){$\left<2,3\right>$};
	            \node at (3.5,2){$\left<3^2\right>$};

	            \node at (-4.5,3.75){$\left<1^3\right>$};
	            \node at (-3.5,4){$\left<1^2,2\right>$};
	            \node at (-2.5,3.75){$\left<1^2,3\right>$};
	            \node at (-1.5,4){$\left<1,2^2\right>$};
	            \node at (-.5,3.75){$\left<2^3\right>$};
	            \node at (0.5,4){$\left<2^2,3\right>$};
	            \node at (1.5,3.75){$\left<1,2,3\right>$};
	            \node at (2.5,4){$\left<1,3^2\right>$};
	            \node at (3.5,3.75){$\left<2,3^2\right>$};
	            \node at (4.5,4){$\left<3^3\right>$};

	            \draw[shorten <=.1in,shorten >=.1in](0,0)--(0,1);
	            \draw[shorten <=.1in,shorten >=.1in](0,0)--(-2,1);
	            \draw[shorten <=.1in,shorten >=.1in](0,0)--(2,1);
	            \draw[shorten <=.1in,shorten >=.1in](0,0)--(4,1);
	            \draw[shorten <=.1in,shorten >=.1in](-2,1)--(-2,2);
	            \draw[shorten <=.1in,shorten >=.1in](0,1)--(-.5,2);
	            \draw[shorten <=.2in,shorten >=.2in](0,1)--(-2,2);
	            \draw[shorten <=.1in,shorten >=.1in](2,1)--(2,2);
	            \draw[shorten <=.2in,shorten >=.2in](-2,1)--(-3.5,2);
	            \draw[shorten <=.2in,shorten >=.2in](-2,1)--(.5,2);
	            \draw[shorten <=.2in,shorten >=.2in](0,1)--(2,2);
	            \draw[shorten <=.2in,shorten >=.2in](2,1)--(.5,2);
	            \draw[shorten <=.2in,shorten >=.2in](2,1)--(3.5,2);

	            \draw[shorten <=.2in,shorten >=.2in](-3.5,2)--(-4.5,3.75);
	            \draw[shorten <=.2in,shorten >=.2in](-3.5,2)--(-3.5,4);
	            \draw[shorten <=.2in,shorten >=.2in](-3.5,2)--(-2.5,3.75);

	            \draw[shorten <=.2in,shorten >=.2in](-2,2)--(-3.5,4);
	            \draw[shorten <=.2in,shorten >=.2in](-2,2)--(-1.5,4);
	            \draw[shorten <=.2in,shorten >=.2in](-2,2)--(1.5,3.75);

	            \draw[shorten <=.2in,shorten >=.2in](-.5,2)--(-1.5,4);
	            \draw[shorten <=.2in,shorten >=.2in](-.5,2)--(-.5,3.75);
	            \draw[shorten <=.2in,shorten >=.2in](-.5,2)--(.5,4);

	            \draw[shorten <=.2in,shorten >=.2in](.5,2)--(-2.5,3.75);
	            \draw[shorten <=.2in,shorten >=.2in](.5,2)--(1.5,3.75);
	            \draw[shorten <=.2in,shorten >=.2in](.5,2)--(2.5,4);

	            \draw[shorten <=.2in,shorten >=.2in](2,2)--(.5,4);
	            \draw[shorten <=.2in,shorten >=.2in](2,2)--(1.5,3.75);
	            \draw[shorten <=.2in,shorten >=.2in](2,2)--(3.5,3.75);

	            \draw[shorten <=.2in,shorten >=.2in](3.5,2)--(2.5,4);
	            \draw[shorten <=.2in,shorten >=.2in](3.5,2)--(3.5,3.75);
	            \draw[shorten <=.2in,shorten >=.2in](3.5,2)--(4.5,4);

	            \node at (4,1){$\ldots$};

	            \node at (-4.5,4.5){$\vdots$};
	            \node at (-3.5,4.5){$\vdots$};
	            \node at (-2.5,4.5){$\vdots$};
	            \node at (-1.5,4.5){$\vdots$};
	            \node at (-.5,4.5){$\vdots$};
	            \node at (4.5,4.5){$\vdots$};
	            \node at (3.5,4.5){$\vdots$};
	            \node at (2.5,4.5){$\vdots$};
	            \node at (1.5,4.5){$\vdots$};
	            \node at (.5,4.5){$\vdots$};

	        \end{scope}
	    \end{tikzpicture}
	\end{center}

		\begin{center}
	    \begin{tikzpicture}
	        \begin{scope}[scale=1.1]
	            \node at (0,0){1};

	            \node at (-2,1){2};
	            \node at (0,1){3};
	            \node at (2,1){5};

	            \node at (-3.5,2){4};
	            \node at (-2,2){6};
	            \node at (-.5,2){9};
	            \node at (.5,2){10};
	            \node at (2,2){15};
	            \node at (3.5,2){25};

	            \node at (-4.5,3.75){8};
	            \node at (-3.5,4){12};
	            \node at (-2.5,3.75){20};
	            \node at (-1.5,4){18};
	            \node at (-.5,3.75){27};
	            \node at (0.5,4){45};
	            \node at (1.5,3.75){30};
	            \node at (2.5,4){50};
	            \node at (3.5,3.75){75};
	            \node at (4.5,4){125};

	            \draw[shorten <=.1in,shorten >=.1in](0,0)--(0,1);
	            \draw[shorten <=.1in,shorten >=.1in](0,0)--(-2,1);
	            \draw[shorten <=.1in,shorten >=.1in](0,0)--(2,1);
	            \draw[shorten <=.1in,shorten >=.1in](0,0)--(4,1);
	            \draw[shorten <=.1in,shorten >=.1in](-2,1)--(-2,2);
	            \draw[shorten <=.1in,shorten >=.1in](0,1)--(-.5,2);
	            \draw[shorten <=.2in,shorten >=.2in](0,1)--(-2,2);
	            \draw[shorten <=.1in,shorten >=.1in](2,1)--(2,2);
	            \draw[shorten <=.2in,shorten >=.2in](-2,1)--(-3.5,2);
	            \draw[shorten <=.2in,shorten >=.2in](-2,1)--(.5,2);
	            \draw[shorten <=.2in,shorten >=.2in](0,1)--(2,2);
	            \draw[shorten <=.2in,shorten >=.2in](2,1)--(.5,2);
	            \draw[shorten <=.2in,shorten >=.2in](2,1)--(3.5,2);

	            \draw[shorten <=.2in,shorten >=.2in](-3.5,2)--(-4.5,3.75);
	            \draw[shorten <=.2in,shorten >=.2in](-3.5,2)--(-3.5,4);
	            \draw[shorten <=.2in,shorten >=.2in](-3.5,2)--(-2.5,3.75);

	            \draw[shorten <=.2in,shorten >=.2in](-2,2)--(-3.5,4);
	            \draw[shorten <=.2in,shorten >=.2in](-2,2)--(-1.5,4);
	            \draw[shorten <=.2in,shorten >=.2in](-2,2)--(1.5,3.75);

	            \draw[shorten <=.2in,shorten >=.2in](-.5,2)--(-1.5,4);
	            \draw[shorten <=.2in,shorten >=.2in](-.5,2)--(-.5,3.75);
	            \draw[shorten <=.2in,shorten >=.2in](-.5,2)--(.5,4);

	            \draw[shorten <=.2in,shorten >=.2in](.5,2)--(-2.5,3.75);
	            \draw[shorten <=.2in,shorten >=.2in](.5,2)--(1.5,3.75);
	            \draw[shorten <=.2in,shorten >=.2in](.5,2)--(2.5,4);

	            \draw[shorten <=.2in,shorten >=.2in](2,2)--(.5,4);
	            \draw[shorten <=.2in,shorten >=.2in](2,2)--(1.5,3.75);
	            \draw[shorten <=.2in,shorten >=.2in](2,2)--(3.5,3.75);

	            \draw[shorten <=.2in,shorten >=.2in](3.5,2)--(2.5,4);
	            \draw[shorten <=.2in,shorten >=.2in](3.5,2)--(3.5,3.75);
	            \draw[shorten <=.2in,shorten >=.2in](3.5,2)--(4.5,4);

	            \node at (4,1){$\ldots$};

	            \node at (-4.5,4.5){$\vdots$};
	            \node at (-3.5,4.5){$\vdots$};
	            \node at (-2.5,4.5){$\vdots$};
	            \node at (-1.5,4.5){$\vdots$};
	            \node at (-.5,4.5){$\vdots$};
	            \node at (4.5,4.5){$\vdots$};
	            \node at (3.5,4.5){$\vdots$};
	            \node at (2.5,4.5){$\vdots$};
	            \node at (1.5,4.5){$\vdots$};
	            \node at (.5,4.5){$\vdots$};

	        \end{scope}
	    \end{tikzpicture}
	\end{center}
	\caption{Illustration of two lattices: partitions ordered by multiset inclusion, integers ordered by divisibility}
\end{figure}

We note that the index statistic $\operatorname{idx}(p), p\in\mathbb P,$ introduced in the proof above of Proposition \ref{prop0} highlights another bijection, between natural numbers and primes, which is implicit in Figure \ref{figure1}. 
Let us define 
the {\it pre-index} $\operatorname{pdx}(i)\in\mathbb P$ of $i\in \mathbb N$ by the prime number $p\in \mathbb P$ whose index $\operatorname{idx}(p)$ is equal to $i$; i.e., $\operatorname{pdx}(i):=p_i$, and we define $\operatorname{pdx}(0):=1$. Then 
%
%
$$\left(\operatorname{pdx}\circ \operatorname{idx}\right)(p)=p,\  \  \  \  \  \  \  \  \left(\operatorname{idx}\circ \operatorname{pdx}\right)(i)=i.$$
\begin{remark}
Furthermore, one has the pair of relations
$${N}(\lambda)=\prod_{p\in\mathbb P}\operatorname{idx}(p)^{m_{\operatorname{idx}(p)}(\lambda)},\  \  \  \  \  \  \  \  \widehat{N}(\lambda)=\prod_{i\in \mathbb N}\operatorname{pdx}(i)^{m_i(\lambda)},$$
where $m_j(\lambda)$ is the multiplicity of $j\geq 0$ in the partition $\lambda$, as above.
\end{remark} 
%

Table \ref{table1} gives a dictionary 
between partition-theoretic objects and their arithmetic counterparts consistent with these bijective maps; the functions $p_{\operatorname{min}}(n), p_{\operatorname{max}}(n)\in\mathbb P$ denote the {\it smallest prime factor} and {\it largest prime factor} of $n\in \mathbb N$, respectively. The right-hand column of the table gives translations from the left column using ``supernormal'' mappings.\footnote{In Section \ref{sect5} we discuss Table \ref{table1} further, and use it to formulate new conjectures.}


\begin{table}[h]
    \centering
    \begin{tabular}{| c | c | c | }
\hline
 \  & {\bf Partition side} &  {\bf Arithmetic side}\\ \hline
 1&$\mathcal P$&$\mathbb N$ \\ \hline
 2&$\lambda$&$n$\  \  \  (supernorm)  \\ \hline
 3&$\lambda\cdot \lambda'$&$n\cdot n'$  \\ \hline
 4&Subpartitions of $\lambda$ &Divisors of $n$ \\ \hline
5&Parts of $\lambda$& Prime factors of $n$\  \  \  (indices)\\ \hline
6&$\left<1^{m_1}, 2^{m_2}, \dots, i^{m_i}, \dots \right>$& $p_1^{m_1}p_2^{m_2}\cdots p_i^{m_i}\cdots$\\ \hline
 7&$\ell(\lambda)$ & $\Omega(n)$,\  number of prime factors \\ \hline
 8&$\rm{sm}(\lambda)$  & $p_{\rm{min}}(n)$\  \  \  (index) \\ \hline
9&$\rm{lg}(\lambda)$&  $p_{\rm{max}}(n)$\  \  \  (index)\\ \hline
10&Partitions with no $1$'s & Odd integers \\ \hline
11&Partitions containing $1$'s & Even integers \\ \hline
12&Partitions into distinct parts &Square-free integers \\ \hline
13&$\mu_{\mathcal{P}}(\lambda)$&$\mu(n)$ \\ \hline
14&$q$-series & Dirichlet series \\ \hline
15&$\prod_{i=1}^{\infty}(1-q^i)$ & ${\zeta(s)^{-1}}$\\  \hline
16&$q^i$ & $p_i^{-s}$ \\ \hline
17&$q^{\vert \lambda \vert}$\  \  in generating functions  
& $n^{-s}$\  \  in Dirichlet series
\\ \hline
18&$q\rightarrow 1^-$ & $s\to1^+$ \\\hline
19&$\sum_i a_i q^i$ & $\sum_i a_i p_i^{-s}$ \\ \hline
20&$(1-q)^{-1}$ & $1+\sum_{p\in \mathbb P} p^{-s}$ \\ 
\hline
\end{tabular}\vskip.1in
    \caption{Dictionary between partition theory and arithmetic.} 
    \label{table1}
\end{table}
%

Before we give applications of this isomorphism, in Section \ref{sectanalytic} we prove analytic facts relating these mappings to other partition-theoretic statistics. In Section \ref{sect3}, 
we prove analytic connections between the partition 
norm, size, and length. In Section \ref{sect4}, we give a similar analysis related to the supernorm. 

In Section \ref{sect2}, from consideration of Figure \ref{figure1}, we 
ask: {what can we say about the arithmetic densities of subsets of integers using ``supernormal'' maps?} This leads us to prove a new identity giving arithmetic densities of subsets of $\mathbb N$ as an analogue of a formula of Kural-McDonald-Sah \cite{K20}. 

In Section \ref{sect5}, as a demonstration of how one might apply the correspondences in Table \ref{table1}, we use a formula of Frobenius and a theorem from \cite{Robert_abelian} to formulate new conjectures that include a generalization of the main arithmetic density theorem of Section \ref{sect2}.

\section{Analytic comparisons of partition statistics}\label{sectanalytic}

 Since $p_i \sim i \log i$ as $i\rightarrow\infty$ by the prime number theorem, equation \eqref{supernormdef} suggests the rough approximation\footnote{By ``$\approx$'' we mean approximately equal, which is to say, of comparable magnitudes.}
$$\widehat{N}(\lambda)\  \approx\ 2^{m_1(\lambda)}\prod_{i\geq 2}(i \log i)^{m_i(\lambda)}\  \approx\  N(\lambda) \cdot 2^{m_1(\lambda)}\prod_{i\geq 2}(\log i)^{m_i(\lambda)},$$
where we pulled out the $i=1$ approximating factor to avoid getting zeroes. 
This approximation has considerable error at smaller indices $i\geq 2$, as we show in Section \ref{sect4} below, but it does qualitatively display the following: {\it partitions with greater lengths have supernorm a lot larger than their norm} -- and thus potentially {\it much} larger than their length and size, since usually
\begin{equation}\label{ineq2} \ell(\lambda)\  \leq\   |\lambda|\  \leq\  N(\lambda)\  \leq \widehat{N}(\lambda),\end{equation} 
noting the second inequality is violated if $\lambda$ has a relatively large number $m_1(\lambda)>N(\lambda)-|\lambda|$ of $1$'s as parts. On the other hand, {partitions with fewer parts have supernorm closer to their norm}.

In this section, we study the relative magnitudes of these partition statistics.\footnote{Studies of other, related connections are found in works of Kumar and Rana \cite{KumarRana_supernorm2, KumarRana_supernorm}.} 

\subsection{Comparing $\ell(\lambda)$, $|\lambda|$, and $N(\lambda)$}\label{sect3}
\subsubsection{Explicit formula for $\log N(\lambda)$}
In this subsection and the next, we examine interrelations and asymptotics between the length, size, and norm. First we prove a formula for the natural logarithm of the norm, written in terms of the length and largest part statistics.

For a partition $\lambda=\left\langle1^{m_1},2^{m_2},\dots, r^{m_r}\right\rangle$ with $\operatorname{lg}(\lambda)=r$, define $\lambda(m,n)$ to be the subpartition of $\lambda$ containing only the parts of $\lambda$ with sizes $m,m+1, m+2,\dots, n-1, n$.
\begin{thm}\label{thm2}
Let $r>1$ be an integer, and define $\varepsilon_r$ to be 1 if $r$ is odd or 0 if $r$ is even.  For a partition $\lambda=\left\langle1^{m_1},2^{m_2},\dots, r^{m_r}\right\rangle$, we have that
\begin{align*}
\log N(\lambda)&=\ell(\lambda)\log r+\varepsilon_rm_{\left\lfloor\frac{r}{2}\right\rfloor}\log\left(\left\lfloor\frac{r}{2}\right\rfloor\right)\\
&\hspace{1cm}+\sum_{i=2}^{\left\lfloor\frac{r}{2}\right\rfloor}\big[\ell(\lambda(i,r-i+1))-\ell(\lambda(1,r-i))\big]\log(r-i+1).
\end{align*}
\end{thm}

\begin{proof}
Applying Abel summation to $\log N(\lambda)=\sum\limits_{i=1}^r m_i\log i$ yields
\begin{flalign*}
\sum_{i=1}^rm_i\log i&=\sum_{i=1}^rm_i\log r-\int_1^r\left(\frac{1}{u}\sum_{i=1}^um_i\right)du\\ &=\sum_{i=1}^rm_i\log r-\int_1^r\frac{\ell(\lambda(1,u))}{u}\,du.
\end{flalign*}
In other words, $\ell(\lambda(1,u))$ is the number of parts in $\lambda$ with size up to $u$.  Since $\ell(\lambda(1,u))$ is a stair-step function which only changes value at each integer value of $u$, we can break up the integral into $r-1$ separate integrals:
\begin{flalign*}&\int_1^r\frac{\ell(\lambda(1,u))}{u}\,du\\ 
&=\int_1^2\frac{\ell(\lambda(1,1))}{u}\,du+\int_2^3\frac{\ell(\lambda(1,2))}{u}\,du+\cdots+\int_{r-1}^r\frac{\ell(\lambda(1,r-1))}{u}\,du.\end{flalign*}
The lengths of the subpartitions are now constant on each interval, so we can evaluate each integral separately, viz.
\begin{align*}
\int_1^2\frac{\ell(\lambda(1,1))}{u}\,du&=\ell(\lambda(1,1))\log2,\\
\int_2^3\frac{\ell(\lambda(1,2))}{u}\,du&=\ell(\lambda(1,2))\big(\log3-\log2\big),\\
&\vdots\\
\int_{r-1}^r\frac{\ell(\lambda(1,r-1))}{u}\,du&=\ell(\lambda(1,r-1))\big(\log r-\log(r-1)\big).
\end{align*}
The sum of these integrals simplifies when we rewrite $\ell(\lambda(1,n))=m_1+m_2+\cdots +m_n$ for each $n$:
\begin{align*}
\int_1^2\frac{\ell(\lambda(1,1))}{u}\,du &+\cdots+\int_{r-1}^r\frac{\ell(\lambda(1,r-1))}{u}\,du\\
&=-m_2\log2-m_3\log3-\cdots-m_{r-1}\log(r-1)\\
&\hspace{1cm}+m_1\log r+m_2\log r+\cdots+m_{r-1}\log r\\
&=\ell(\lambda(1,r-1))\log r-\sum_{i=2}^{r-1}m_i\log i.
\end{align*}
Now, we can write $\log N(\lambda)$ as follows:
$$\sum_{i=1}^rm_i\log i=\ell(\lambda)\log r-\ell(\lambda(1,r-1))\log r+\sum_{i=2}^{r-1}m_i\log i.$$
The remaining sum is $\log N(\lambda(2,r-1))$, which we can rewrite by using Abel summation again:
$$\sum_{i=2}^{r-1}m_i\log i=\big[\ell(\lambda(2,r-1))-\ell(\lambda(1,r-2))\big]\log(r-1)+\sum_{i=3}^{r-2}m_i\log i.$$
More generally, the $j$th remaining sum, $1\leq j\leq\left\lfloor\frac{r}{2}\right\rfloor$, can be written as follows:
$$\sum_{i=j}^{r-j+1}m_i\log i=\big[\ell(\lambda(j,r-j+1))-\ell(\lambda,1,r-j))\big]\log(r-j+1)+\sum_{i=j+1}^{r-j}m_i\log i.$$
Repeating Abel summation on the remaining sum each time until the bounds meet at $\left\lfloor\frac{r}{2}\right\rfloor$ yields the desired formula for $\log N(\lambda)$.
\end{proof}

\subsubsection{Explicit formula for $|\lambda|$}
In this subsection we record an explicit formula for the partition size, written in terms of the length and largest part.
\begin{thm}\label{thm3}
For partition $\lambda=\left\langle1^{m_1},2^{m_2},\dots,r^{m_r}\right\rangle$, we have that $$|\lambda|=\ell(\lambda)r+\sum_{i=3}^r\big(r-i\big)\ell(\lambda(1,i-1))-rm_1.$$
\end{thm}

\begin{proof}
Applying Abel summation to $|\lambda|=\sum\limits_{i=1}^rm_i\cdot i$ yields
\begin{align*}
\sum_{i=1}^rm_i\cdot i&=\sum_{i=1}^rm_i\cdot r-m_1-\int_1^r\left(\sum_{i=1}^um_i\right)du\\
&=\ell(\lambda)r-m_1-\int_1^r\ell(\lambda(1,u))\,du.
\end{align*}
Again using the fact that $\ell(\lambda(1,u))$ is a stair-step function which only changes value at the integers, we can break up the integral into $r-1$ integrals and evaluate each integral separately:
\begin{align*}
&\int_1^r\ell(\lambda(1,u))\,du\\&=\int_1^2\ell(\lambda(1,1))\,du+\int_2^3\ell(\lambda(1,2))\,du+\cdots+\int_{r-1}^r\ell(\lambda(1,r-1))\,du\\
&=\ell(\lambda(1,1))+\ell(\lambda(1,2))+\cdots+\ell(\lambda(1,k-1))=\sum_{i=1}^{r-1}\ell(\lambda(1,i)).
\end{align*}
Rewriting each subpartition length as a sum of multiplicities, we obtain the following:
\begin{align*}
\sum_{i=1}^{r-1}\ell(\lambda(1,i))&=\sum_{i=1}^{r-1}\sum_{j=1}^im_j\\
&=(r-1)m_1+(r-2)m_2+\cdots+2m_{r-2}+m_{r-1}\\
&=\sum_{i=1}^{r-1}(r-i)m_i\\
&=\sum_{i=1}^{r-1}rm_i-\sum_{i=1}^{r-1}m_i\cdot i\\
&=\ell(\lambda(1,r-1))r-\sum_{i=1}^{r-1}m_i\cdot i.
\end{align*}
Then we obtain the following formula for $|\lambda|$:
$$\sum_{i=1}^rm_i\cdot i=\ell(\lambda)r-m_1-\ell(\lambda(1,r-1))r+\sum_{i=1}^{r-1}m_i\cdot i.$$
Repeating Abel summation on the remaining sum, we have that
$$\sum_{i=1}^{r-1}m_i\cdot i=\ell(\lambda(1,r-1))r-m_1-(r-1)\ell(\lambda(1,r-2))+\sum_{i=1}^{r-2}m_i\cdot i.$$
In general, applying Abel summation to the remaining sum up to $j$ yields the following formula:
$$\sum_{i=1}^jm_i\cdot i=\ell(\lambda(1,j))r-m_1-j\ell(\lambda(1,j-1))+\sum_{i=1}^{j-1}m_i\cdot i.$$
Therefore, the formula for $|\lambda|$ simplifies to the following:
\begin{align*}
&\sum_{i=1}^rm_i\cdot i=\ell(\lambda) r-\ell(\lambda(1,r-1))r\\
&\hspace{2cm}+\ell(\lambda(1,r-1))r-(r-1)\ell(\lambda(1,r-2))\\
&\hspace{2cm}+\cdots+\ell(\lambda(1,2))r-2\ell(\lambda(1,1))+m_1-(r-1)m_1\\
&=\sum_{i=1}^{r-1}\big[\ell(\lambda(1,r-i+1))r-(r-i+1)\ell(\lambda(1,r-i))\big]-(r-2)m_1\\
&=r\sum_{i=2}^r\ell(\lambda(1,i))-\sum_{i=1}^{r-1}(i+1)\ell(\lambda(1,i))-(r-2)m_1.
\end{align*}
This gives the desired result.
\end{proof}

\subsection{Analysis of the supernorm}\label{sect4}

\subsubsection{Explicit bounds} In this subsection and the next, we look at estimating the magnitude of the supernorm, and finish with a combinatorial connection between the length, size, norm and supernorm statistics.

 \begin{thm}\label{normestimatethm}
        For $\lambda\in \mathcal P$ we have 
        \[\widehat{N}(\lambda)\leq N(\lambda)^{\frac{\log 3}{\log 2}}.\]
    \end{thm}

    \begin{proof}
        If $\lambda = \langle 1^{m_1},2^{m_2},\ldots \rangle$ and we set $N_d:= d^{m_d}$, then a simple computation shows \[ \widehat{N}(\lambda) = \prod_d N_d^{\frac{\log p_d}{\log d}} \] where $p_d$ is the $d$th prime. We now focus on the exponents \[a(d):=\frac{\log p_d}{\log d}\] and note that it suffices to show $a(d) \leq \log 3 / \log 2$ for all $d$. This can be shown quickly using the prime number theorem, but we choose to give a more elementary proof. One can easily obtain the upper bound for $p_d$: \[ p_d \leq 2^{e/2} d \log d, \] and we can use this to obtain an upper bound for $a(d)$: \[a(d) \leq 1+ \frac{e \log 2}{2 \log d} + \frac{\log_2 d}{\log d}.\] Now we see the inequality $a(d) \leq \log 3/\log 2$ certainly holds for $d>52$; the remaining cases can quickly be checked. \end{proof}

        \begin{thm}\label{supernorminterval}
    If $\lambda$ is a partition with $|\lambda|=n$ and $p_n$ is the $n$th prime, then \[ p_n\leq \widehat{N}(\lambda) \leq 2^n.\]
\end{thm}

\begin{proof}
    We start with the upper bound. Suppose that among all partitions of size $n$, $\lambda$ maximizes $\widehat{N}(\lambda)$. Then it suffices to show that $\ell(\lambda)=n$, since in this case $\lambda = \left\langle 1^n \right\rangle$ and $\widehat{N}(\lambda)=2^n$. Suppose then that $\ell(\lambda)=k<n$. There must be a part of $\lambda$ that is larger than 1, so let $r$ denote a largest such part. We can then inductively assume that the partition is of the form $\lambda =\left<1^{n-r}, r\right>$ (for $n=1$, $n=2$, etc. the hypothesis is easily verified). Now we have that $\widehat{N}(\left<1^{n-r}, r\right>)=2^{n-r}p_r$, but a simple application of Bertrand's postulate shows that $p_r<2 p_{r-1}$. This shows that as long as $k<n$, we can increase $k$ while only increasing the integer to which our partition is mapped by $\widehat{N}$; in other words, we must have $k=n$.

    For the lower bound, we argue similarly except now assuming that $\lambda$ minimizes $\widehat{N}(\lambda)$ among all partitions of size $n$. It suffices to show that $\ell(\lambda)=1$, and we notice that for $n=1$, $n=2$, $n=3$, etc. this is the case. Thus we can inductively assume that $\widehat{N}(\lambda)=p_rp_{n-r}$. It now suffices to show that $p_a p_b > p_{a+b}$ for all positive integers $a$ and $b$. Notice that if $a=1$ this is simply Bertrand's postulate, so assume without loss of generality that $1<a\leq b$. Thus $p_{a+b} \leq p_{2b}$ and $p_a p_b \geq 3p_b$. We now show that $p_{2b} \leq 3 p_b$ for all $b$. By a result of Rosser and Shoenfeld \cite{ros}, we have for $n\geq 2$ \[ p_n > n (\log n + \log\log n -3/2)\] and for $n\geq 20$ \[p_n < n ( \log n + \log\log n - 1/2).\] A simple computation shows that $p_{2b} \leq 3p_{b}$ for all $b\geq 48$. The remaining cases can be quickly verified with a computer. \end{proof}

    \subsubsection{Other considerations}

  Since the supernorm of a partition shares multiplicative similarities to the norm, we might hope to develop formulas like those described in Theorems \ref{thm2} and \ref{thm3}. Let $\lambda=\langle 1^{m_1},2^{m_2},\ldots \rangle$. Then
\[\log \widehat{N}(\lambda) = \sum_{i\geq 1} m_i \log p_i,\]
and it is reasonable to try to apply the prime number theorem to estimate $\log p_i$. The trouble is that most numbers will have small prime factors for which the prime number theorem is not accurate. For example, $p_3=5$, while $3\log 3 \approx 3.30$. If the part 3 appears in the partition $\lambda$ multiple times, then the error in this approximation compounds. 

However, it is possible to explicitly relate the length, size, norm, and supernorm for special classes of partitions. Here we give an example of such a class, and we note that other related classes could be studied as well. 
%
%
%
%

    Let $\lambda_{[n]}:=\left<1,2,3,\dots,n-1,n \right>\in \mathcal P$ denote the partition into distinct parts consisting of the first $n$ natural numbers. Then we see immediately\footnote{For $p,p'\in \mathbb P$, recall that the $p$-{\it primorial} is defined by $p\#:=\prod_{p'\leq p} p'$ (product of primes $\leq p$).}:
    \begin{align*}
    &\ell\left(\lambda_{[n]}\right)\  =\  n\  =\  \exp(\log n)\text{ as $n\rightarrow \infty$},\\
        &\left|\lambda_{[n]}\right|\  =\  {n+1\choose 2} =\  \exp(2\log n + O(1)) \text{ as $n\rightarrow \infty$},\\
        &N\left(\lambda_{[n]}\right)\  =\   n! \  =\  \exp(n\log n - n + o(n))\text{ as $n\rightarrow \infty$},\\
       &\widehat{N}\left(\lambda_{[n]}\right)\  =\  p_n\# \  =\  \exp(n\log n (1+o(1)))\text{ as $n\rightarrow \infty$}.
    \end{align*}
Interestingly, the left-hand sides give natural partition-theoretic interpretations for the classical combinatorial statistics and asymptotics on the right.  

\section{Partition bijections and arithmetic densities}\label{sect2}

One point that is evident when considering the isomorphism between $\mathcal P$ and $\mathbb N$ in Figure \ref{figure1} is that {bijections between special subsets of partitions imply analogous bijections between subsets of natural numbers}. 
Now, recall that the {\it arithmetic density} $d(S)$ of a subset $S\subseteq \mathbb N$ is defined by 
        \begin{equation}\label{dS} d(S):=\lim_{N\to \infty}\frac{\#\{n\in S\  :\  n\leq N\}}{N},\end{equation}
if the limit exists. 
Then it is natural to wonder: {\it what can we say about the arithmetic densities of subsets of integers using ``supernormal'' maps?} 

\subsection{Comparing arithmetic densities of bijective subsets}

Note that partition conjugation yields the well-known bijection between partitions of {\it length} $k$ and partitions with {\it largest part} $k$. Within the set $\mathbb N$, since parts of partitions map to indices of prime factors in the ``supernormal'' setting, 
this bijection induces a bijection between classes of integers with restricted prime factorizations.

\begin{example}\label{ex1}
There is a natural bijection between integers with $k\geq 1$ prime factors including repetition, and integers whose largest prime factor is $p_k$ (the $k$th prime). Furthermore, among the integers whose prime factors' indices sum to fixed $n\geq 1$, the number of integers with $k$ prime factors is equal to the number of integers with largest prime factor $p_k$. 
\end{example}

\begin{remark}
Note that the sum of the indices $\sum_{p|n}\operatorname{idx}(p)$ of the prime factors of an integer $n$, including repetition, is analogous to partition size in this context.
\end{remark}
Similarly, the well-known bijection of Euler between partitions into {\it distinct parts} and partitions into  {\it odd parts} implies another fact about integers. 

\begin{example}\label{ex2}
There is a natural bijection between square-free integers and integers with only odd-indexed prime factors, e.g. $p_1=2, p_3=5, p_5=11,$ etc. Furthermore, among the integers whose prime factors' indices sum to fixed $n\geq 1$, the number of squarefree integers is equal to the number of integers with only odd-indexed prime factors. \end{example}

\begin{remark}
The general partition result gives the same bijection between $k$th power-free integers, and integers with no prime factor's index divisible by $k$.
\end{remark}

%
%
%
%
%
%
%
%
These bijections were given in \cite{KM} from the perspective of multiplicative partitions. Such obscure properties of integers would be difficult to prove (or even notice) without ideas about partitions. Likewise, many theorems about partitions can be reinterpreted as facts about special subsets of $\mathbb N$.

A noteworthy feature of Example \ref{ex1} 
is that the arithmetic densities of these two bijective subsets are equal: 
almost all natural numbers do {\it not} have exactly $k$ prime factors; it is just as clear that almost all natural numbers do {\it not} have largest prime factor $p_k$ for fixed $k$. Both of the subsets of $\mathbb N$ in Example \ref{ex1} can be shown to have arithmetic density zero. 

Is it always the case that bijective subsets arising from the supernorm map also have matching arithmetic densities? 
Let us look at Example \ref{ex2} above: the density of squarefree numbers is well known to be $6/\pi^2$. Is this also the density of the set $S_k$ of integers with no prime factor's index divisible by $k$?

 \begin{proposition}\label{prop1}
                Let $S_k\subset \mathbb N$ be the subset of natural numbers with no prime factor's index divisible by fixed $k\geq 2$. Then \[d(S_k)  = 0.\]
            \end{proposition}
            
            \begin{proof}[Proof of Proposition \ref{prop1}]
                We use a straightforward sieving argument. Let $\mathbb P_k$ be the set of all primes that {\it are} indexed by a multiple of $k$; i.e., $p_j\in\mathbb P_k$ if and only if $k\  |\  j$. Then if we choose a parameter $T=T(x)$ depending on $x\in \mathbb R^+$ and define \[\mathbb P_k(T):=\mathbb P_k\cap[2,T]\] and $$S_k(x):=\#\left\{n\in S_k:n\leq x\right\},$$ we can write \[0\leq S_k(x) \leq x- \sum_{j\geq 1}\sum_{\substack{A\subset \mathbb P_k(T) \\ |A|=j}}(-1)^j\left\lfloor \frac{x}{\prod_{p\in A} p}\right\rfloor,\] which is an application of the principle of inclusion-exclusion, with $\left\lfloor X \right\rfloor$ the floor function of $X\in \mathbb R$. We can break up the floor function using $\left\lfloor X \right\rfloor = X-\{X\}$, with $\{X\}$ the fractional part of $X\in \mathbb R$,  to see that \[S_k(x) \leq x\prod_{p\in \mathbb P_k(T)}\left(1-\frac{1}{p}\right) + O\left(2^{\pi(T)}\right), \] where $\pi(X)$ is the total number of primes not exceeding $X\geq 2$.  We recall that $\pi(X)\sim X/\log X$ as $X\to \infty$ by the prime number theorem. Setting $T=\log x$ gives \[S_k(x) \leq x \prod_{p\in \mathbb P_k(\log x)} \left(1-\frac{1}{p}\right) + o(x).\] This gives the upper bound on the arithmetic density: 
                \begin{equation}\label{prod} d(S_k)\  =\  \lim_{x\to \infty}\frac{S_k(x)}{x}\   \leq\   \prod_{p\in \mathbb P_k} \left(1-\frac{1}{p}\right).\end{equation} 
                That the product diverges to zero is equivalent to the divergence of the sum \begin{equation*}\sum_{p\in \mathbb P_k} \frac{1}{p},\end{equation*} which is clear since in the tail of the series, $p_{mk} \sim k\cdot m\log m$ as $m\rightarrow \infty$  by the prime number theorem. Thus $d(S_k)$ is zero. \end{proof}

So the arithmetic densities of bijective subsets do {not} necessarily match up in the image of the supernorm map, as Example \ref{ex2} demonstrates. As we proved in Section \ref{sect4}, the supernorm scatters partitions of size $n$ over a very large interval in $\mathbb N$ as $n\to \infty$; namely,\begin{equation*} 
p_n\  \leq\  \widehat{N}(\lambda)\   \leq\  2^n\end{equation*} for $|\lambda|=n$. We note that 
$p(n)=o(2^n)$ by the Hardy-Ramanujan asymptotic (see \cite{A88}). 
Seeking explicit transformations between densities of resulting bijective subsets on this large interval would likely be challenging (but potentially worthwhile). 

\subsection{An ``Alladi-like'' formula for arithmetic densities}

A less difficult route to computing arithmetic densities via ``supernormal'' bijections is to map from {\it natural} densities. Recall that the {\it natural density} $d^*(A)$ of a subset $A\subseteq \mathbb P$ is defined by    
\begin{equation}\label{dirdens} d^*(A) = \lim_{x\rightarrow \infty} \frac{\#\{p\in A\  :\  p\leq x\}}{\pi(x)},\end{equation}
if the limit exists, where $\pi(x)$ is the number of primes not exceeding $x\geq 2$.

Following up on a 1977 paper of Alladi \cite{Alladi} in analytic number theory, as well as subsequent work of the first author \cite{dawsey} in algebraic number theory,  
recent works \cite{K20, bwang, Wang3} generalize Alladi's ideas and formulas. These works prove what we will call ``Alladi-like'' formulas of the shape \begin{equation}\label{Alladilike} \sum_{n\geq2} \frac{\mu^*(n)f(n)}{n}\  :=\  \lim_{x\to \infty}\sum_{2\leq n \leq x} \frac{\mu^*(n)f(n)}{n}\  <\  \infty,\end{equation} with $\mu^*(n):=-\mu(n)$ and $f(n)$ an arithmetic function, where the left-hand sum is defined to equal the right-hand limit when the limit exists and converges conditionally in that case.\footnote{Proofs of convergence of such series are given in \cite{Alladi, dawsey}, for example.}  
These types of formulas are used to compute {natural} densities and other constants; for instance, it is a special case of a result of Kural-McDonald-Sah \cite{K20} stated as \cite[Eq. 4]{bwang}, that
        \begin{equation}\label{eqn}
        \sum_{\substack{n\geq2 \\ p_{\operatorname{min}}(n) \in A\subseteq \mathbb P}} \frac{\mu^*(n)}{n}\  = \  d^*(A),
        \end{equation}
        where $A$ is a subset of the {primes} and 
         $p_{\operatorname{min}}(k)$ denotes the {smallest prime factor} of $k\in \mathbb N$. 
         In a related direction, in recent work \cite{OSW, OSW2}, Ono, Wagner, and the third author prove {\it partition-theoretic} formulas to compute {\it arithmetic} densities of subsets of $\mathbb N$ as limiting values of $q$-series as $q\to 1^-$. 
         
As an application of  ``supernormal'' mappings, we will prove an ``Alladi-like'' 
formula analogous to \eqref{eqn} to compute arithmetic densities of subsets $S\subseteq\mathbb N$ (as opposed to natural densities of subsets of $\mathbb P$ like the formulas noted above). 
%
 Recall that we define $\mu^*(n):=-\mu(n)$. 

\begin{thm}\label{thm1}
    Let $S\subseteq \mathbb N$, and let $d(S)$ be the arithmetic density of $S$, if the density exists. 
    Then
    \[\sum_{\substack{n\geq 2 \\ \operatorname{idx}(p_{\operatorname{min}}(n))\in S}} \frac{\mu^*(n)}{n} \  =\   d(S).\]   
\end{thm}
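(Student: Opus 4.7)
My plan is to reduce Theorem~\ref{thm1} to the Kural--McDonald--Sah formula \eqref{eqn} via the bijection $n \leftrightarrow p_n$ between $\mathbb{N}$ and $\mathbb{P}$. Under this identification, the statistic $\operatorname{idx}(p_{\min}(\cdot))$, valued in $\mathbb{N}$, is transported to the statistic $p_{\min}(\cdot)$, valued in $\mathbb{P}$; and the arithmetic density of a subset $S \subseteq \mathbb{N}$ should match the natural density of the corresponding set of primes $\{p_n : n \in S\} \subseteq \mathbb{P}$. Once these two translations are in hand, the theorem becomes formally equivalent to \eqref{eqn}.

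First, given $S \subseteq \mathbb{N}$, I would introduce the companion set $A := \{p_n : n \in S\} \subseteq \mathbb{P}$. Since $\operatorname{idx}(p_{\min}(n)) \in S$ is equivalent to $p_{\min}(n) \in A$, the sum on the left-hand side of Theorem~\ref{thm1} is term-for-term identical to the left-hand side of \eqref{eqn} with this $A$. Applying \eqref{eqn} therefore reduces the theorem to verifying the density identity $d^*(A) = d(S)$.

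Next, I would verify this density transfer directly from the definitions. For each $N \ge 1$, the facts $\pi(p_N) = N$ and $\#\{p \in A : p \le p_N\} = \#\{n \in S : n \le N\}$ yield
\[\frac{\#\{p \in A : p \le p_N\}}{\pi(p_N)} \;=\; \frac{\#\{n \in S : n \le N\}}{N},\]
whose right-hand side tends to $d(S)$ by hypothesis. To promote the left-hand side to the full limit $d^*(A) = \lim_{x\to\infty} \#\{p \in A : p \le x\}/\pi(x)$, I would observe that both numerator and denominator are constant on each half-open interval $[p_N, p_{N+1})$, so the subsequential limit at the primes equals the full limit as $x \to \infty$.

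The only real subtlety is confirming that the existence hypothesis on $d(S)$ is exactly what is needed to invoke \eqref{eqn} (which presumably requires $d^*(A)$ to exist). The piecewise-constant argument above gives this equivalence for free: $d(S)$ exists if and only if $d^*(A)$ does, and they agree. I do not foresee a deeper obstacle, since the theorem is essentially a relabeling of \eqref{eqn} under the natural bijection $\mathbb{N} \leftrightarrow \mathbb{P}$ provided by the prime indexing $n \mapsto p_n$.
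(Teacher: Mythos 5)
Your proposal is correct and follows essentially the same route as the paper: both reduce to equation~\eqref{eqn} via the companion set $A=\{p_n:n\in S\}$ and then verify the density transfer $d^*(A)=d(S)$, which the paper isolates as Lemma~\ref{lem1}. The only difference is cosmetic: the paper proves the transfer with an explicit $\varepsilon$--$N$ argument, while you use the (equally valid, perhaps slightly cleaner) observation that $\pi_A(x)/\pi(x)$ is piecewise constant on $[p_N,p_{N+1})$ so its limit coincides with the limit of $S(N)/N$.
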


\begin{remark}
The sum is taken over integers for which the {\it index} of the smallest prime factor is in $S\subseteq \mathbb N$.
\end{remark}

\subsection{Proof of the arithmetic density formula}
We use the observation that supernormal maps take the partition $(k)\in\mathcal P$ consisting of the single part $k\geq 1$ to the $k$th prime number; i.e., $\widehat{N}\left((k)\right)=\operatorname{pdx}(k)=p_k$. Then special subsets of $\mathbb P$ are in bijection with special subsets of $\mathbb N$, viz. the prime indices, and have comparable densities. 
%

\begin{lem}\label{lem1}
    Suppose $p\in A\subseteq \mathbb P$ if and only if $\operatorname{idx}(p)\in S\subseteq \mathbb N$; then $d^*(A) = d(S)$, assuming one of the two limits exists.
\end{lem}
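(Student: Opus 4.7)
The plan is to exploit the fact that the hypothesis sets up an explicit bijection $p_n \leftrightarrow n$ between elements of $A \subseteq \mathbb{P}$ and elements of $S \subseteq \mathbb{N}$, and then simply translate the prime-counting estimate into an integer-counting estimate via the function $\pi$.

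First I would rewrite the counting function appearing in the numerator of $d^*(A)$. By the assumption $p \in A \iff \operatorname{idx}(p) \in S$, the prime $p_n$ belongs to $A$ if and only if $n \in S$; moreover $p_n \leq x$ if and only if $n \leq \pi(x)$. Combining these two observations gives the key identity
\[
\#\{p \in A : p \leq x\} \; = \; \#\{n \in S : n \leq \pi(x)\}
\]
for every real $x \geq 2$. Substituting into the definition \eqref{dirdens} of natural density, one obtains
\[
d^*(A) \; = \; \lim_{x \to \infty} \frac{\#\{n \in S : n \leq \pi(x)\}}{\pi(x)}.
\]

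The second step is to pass from the continuous variable $x$ to the integer variable $N := \pi(x)$. Because $\pi$ is a non-decreasing step function that attains every positive integer value (indeed $\pi(p_N) = N$) and tends to $\infty$ with $x$, the limit above may be evaluated along the subsequence $x = p_N$, which gives
\[
\lim_{N \to \infty} \frac{\#\{n \in S : n \leq N\}}{N} \; = \; d(S).
\]
If $d(S)$ is assumed to exist, then a standard $\varepsilon$-argument shows that for all sufficiently large $x$ the ratio is within $\varepsilon$ of $d(S)$, so $d^*(A)$ also exists and equals $d(S)$. If instead $d^*(A)$ is assumed to exist, evaluating along $x = p_N$ immediately yields the same equality.

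There is no real obstacle here; the only thing to watch is that the reduction from real $x$ to integer $N = \pi(x)$ is valid in both directions, which follows from the monotonicity of $\pi$ and the fact that $\pi$ assumes every value in $\mathbb{N}$. The content of the lemma is essentially the observation that the prime counting function $\pi$ is a monotone bijection between the primes up to $x$ and the integers up to $\pi(x)$, which is precisely the mechanism that lets the supernorm transfer density statements between $\mathbb{P}$ and $\mathbb{N}$.
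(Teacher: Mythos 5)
Your proposal is correct and follows essentially the same path as the paper's proof: both hinge on the identity $\#\{p\in A : p\leq x\} = \#\{n\in S : n\leq \pi(x)\}$ and then use the monotonicity and surjectivity of $\pi$ onto $\mathbb N$ to transfer the limit, the paper spelling out the $\varepsilon$-argument for one direction and invoking symmetry for the other.
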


\begin{proof}
    Assume that $d^*(A)$ exists. Set $S(x):=\#\{n\in S\  :\  n\leq x\}$. Then for any $\varepsilon>0$ there exists an $N\geq 1$ such that for all $x>N$ we have
    \[d^*(A)-\varepsilon\   <\   \frac{\pi_A(x)}{\pi(x)} \  <\   d^*(A) +\varepsilon,\]  where the we count primes $p\leq x$ with 
    \begin{align*}
        \pi_A(x)\  :=\  \#\{ p \leq x :  p \in A\} \  
         =\  \#\{ n \leq \pi(x) : n \in S \} \  
        =\  S(\pi(x)).
    \end{align*}
    Thus if we choose $M=\pi(N)$, then we have for all $y>M$
    \[d^*(A) - \varepsilon \  <\   \frac{S(y)}{y}\   <\   d^*(A) + \varepsilon,\]
    which shows that $d(S)$ exists and is equal to $d^*(A)$. By symmetry, a similar argument proves that if $d(S)$ exists, then $d^*(A)$ exists and is equal to $d(S)$. \end{proof}

\begin{remark}
Similar arguments might be applied to compare the density of a subset $\{c_{n_1}, c_{n_2}, c_{n_3}, \dots\}\subseteq \{c_1,c_2,c_3,\dots\}=: C$ in any discrete infinite set $C$ to the arithmetic density of the set of indices $\{n_1, n_2, n_3, \dots\}\subseteq \mathbb N$. 
\end{remark}

Applying Lemma \ref{lem1} together with equation \eqref{eqn}, Theorem \ref{thm1} follows.

    \begin{proof}[Proof of Theorem \ref{thm1}]
        We start from equation \eqref{eqn} above. 
        Now let $S$ be any set of positive integers such that $d(S)$ exists, and define
        \begin{equation}\label{Adef} A=A(S):= \left\{ p_a \in \mathbb P: a\in S \right\}.\end{equation}
        Then by Lemma \ref{lem1}, $d^*(A)$ exists and is equal to $d(S)$. Therefore, by (\ref{eqn}):
        \begin{align*}
            d^*(A)
            =\sum_{\substack{n\geq2 \\ p_{\operatorname{min}}(n) \in A}} \frac{\mu^*(n)}{n}
            =\sum_{\substack{n\geq 2 \\ \operatorname{idx}(p_{\operatorname{min}}(n))\in S}} \frac{\mu^*(n)}{n}\  =\  d(S). 
        \end{align*}
    We note that since $p_i<p_j$ if and only if $i<j$, conditional convergence from \eqref{eqn} continues to hold as we do not alter the order of the summands. Thus the final equality above is the statement of the theorem. \end{proof}

\section{``Supernormal'' maps and Abelian-type conjectures}\label{sect5}

Let us return to considering Table \ref{table1} from Section \ref{sect1}. We note that similar but distinct additive-multiplicative analogies exist in other settings; for instance, there are partition zeta function and arithmetic function analogies proved in \cite{Robert_zeta, Robert_bracket} related to the norm instead of the supernorm, and tables in \cite{OSW, OSW2,SSzeta} give analogies arising in the contexts of those works.

In the citations noted above, the third author and his coauthors used mappings like those in Table \ref{table1} to conjecture, and subsequently prove, theorems on both the additive and multiplicative sides of number theory. Indeed, the present authors used Table \ref{table1}, plus the result \eqref{eqn} of Kural-McDonald-Sah and analogies with \cite{OSW, OSW2}, 
to formulate and prove Theorem \ref{thm1}. 
As examples of this method of formulating conjectures, 
we will state two conjectures that are somewhat speculative but do specialize to known cases.

The first thirteen rows of Table \ref{table1} are direct translations. 
For example, it is clear that $\mu_{\mathcal P}(\lambda)$ in equation \eqref{mobius} transforms to the definition of the classical $\mu(n)$ simply by substituting right-hand-column symbols and words for the corresponding terms from the left column, and vice versa. This theme continues in the subsequent rows, but the correspondences are more analytic in nature. Even so, it is just as clear that, for instance, replacing $q^i$ with $p_i^{-s}$ transforms $\prod_{i=1}^{\infty}(1-q^i)$ into $\prod_{i=1}^{\infty}(1-p_{i}^{-s})=\zeta(s)^{-1}$. Expanding these products yields the mapping between $$\sum_{\lambda\in\mathcal P}\mu_{\mathcal P}(\lambda)q^{|\lambda|}=\sum_{\lambda\in \mathcal P}\mu_{\mathcal P}(\lambda)q^{\lambda_1} q^{\lambda_2}\cdots q^{\lambda_r}$$ and $$\sum_{n\in \mathbb N} \mu(n)n^{-s} = \sum_{\lambda\in \mathcal P}\mu_{\mathcal P}(\lambda) p_{\lambda_1}^{-s}  p_{\lambda_2}^{-s}\cdots p_{\lambda_r}^{-s};$$ 
we note from the product forms that both cases have limiting value zero as $q\to 1^-$ and $s\to 1^+$, respectively.\footnote{Conformal mapping between the complex regions $|q|<1$ and $\operatorname{Re}(s)>1$ gives $s=2/(1+q)\to 1^+$ as $q\to 1^-$.} The last two rows give the analogues of power series and geometric series consistent with this scheme.\footnote{The sum $\sum_p p^{-s}$ is often referred to in the literature as the {\it prime zeta function}; in this setting it is the analogue of the geometric series $q/(1-q)$.}

%
%
%


Now, let us apply the mappings of Table \ref{table1}. 
In a recent paper \cite{Robert_abelian} generalizing \cite{OSW, OSW2}, the third author proves families of Abelian theorems using partition theory and $q$-series. As analogues of this work, we will conjecture Dirichlet series and ``Alladi-like'' Abelian formulas using Table \ref{table1}. 

Let us start with a theorem of Frobenius \cite{Frob} that is a central component of the proofs of \cite{Robert_abelian}. For an arithmetic function $f:\mathbb N \to \mathbb C$ (for which we take $f(0):=0$ when needed), if the limit of the average value of $f$ is
$$L_f:=\lim_{N\to \infty}\  \frac{1}{N}\sum_{k=1}^{N}f(k),$$ then for $|q|<1, q\to 1^-$ {radially},\footnote{In fact, the limit \eqref{Frobeq} holds if $q\to 1$ through any path in a {Stolz sector} of the unit disk; i.e., a region with vertex at $z=1$ such that $\frac{|1-q|}{1-|q|}\leq M$ for some $M> 0$. Can Stolz sectors be mapped to regions in $\operatorname{Re}(s)>1$ to give analogous limiting behavior in $s$?} 
Frobenius gives 
\begin{equation}\label{Frobeq} 
\lim_{q\to 1^-}\  (1-q)\sum_{n\geq 1}f(n)q^n\  =\  L_f.
\end{equation}
If the sequence $\{f(n)\}$ converges, then \eqref{Frobeq} is the statement of {Abel's convergence theorem}. If $\{f(n)\}$ diverges, then one may assign the limiting value $L_f$ to the sequence so long as the {\it average value} converges. 

By Table \ref{table1}, the limit of \eqref{Frobeq} transforms like $\lim_{q\to 1^-}  \mapsto\  \lim_{s\to 1^+}$, and the summands of the power series transform like $f(n)q^n \mapsto f(n) p_n^{-s}=f(\operatorname{idx}(p_n)) p_n^{-s}$. Does the multiplicative factor $1-q=1-q^1$ also transform to $1-p_1^{-s}=1-2^{-s}\to 1/2$ by the $q^i\mapsto p_i^{-s}$ correspondence to yield as the appropriate conjectural analogue of \eqref{Frobeq} the following: 
\begin{equation*}
\lim_{q\to 1^-}\  (1-q)\sum_{n\geq 1}f(n)q^n\  \  \stackrel{?}{\longmapsto}\  \  \lim_{s\to 1^+}\  \frac{1}{2}\sum_{n\geq 1}f(n)p_n^{-s}\  \  ?
\end{equation*}

This is an example of how the analysis, as well as the mappings between partition parts and prime indices, should inform our thinking. We want both sides of our conjectural analogue to have analogous limiting behaviors. 
Ideally, an analogue of \eqref{Frobeq} will mimic the implicit use of L'Hospital's rule as well as the bijective correspondences. Now, the Dirichlet series on the right of the conjectural analogue above also diverges to infinity, but with no divergent denominator term to serve as a counter-balance, it will not yield analogous limiting behavior. To achieve the correct analogue, we want to transform $1-q=1/(1-q)^{-1}$ via the geometric series correspondence in row 20 of Table \ref{table1} instead of using $q^i\mapsto p_i^{-s}$.\footnote{Noting that $1+\sum_p p^{-s} \sim \sum_p p^{-s}$ as $s\to 1^+$.} 


\begin{conjecture}\label{conj1}
For $f(n)$ an arithmetic function, if the limit of the average value $L_f=\lim_{N\to \infty}\frac{1}{N}\sum_{k=1}^{N} f(k)$ exists, then 
$$\lim_{s\to 1^+}\  \frac{\sum_{p\in \mathbb P} f\left(\operatorname{idx}(p)\right)p^{-s}}{\sum_{p\in \mathbb P} p^{-s}}\  \   =\  \  L_f.$$
\end{conjecture}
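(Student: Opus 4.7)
\textbf{Proof plan for Conjecture \ref{conj1}.} My approach would be a classical Abelian-theorem argument by partial summation. Set $F(N):=\sum_{k=1}^{N}f(k)$, so the hypothesis reads $F(N)/N\to L_f$ and in particular $F(N)=O(N)$. Reindexing the sums over primes via $p=p_n$ and applying Abel summation gives
$$\sum_{n=1}^{N}f(n)p_n^{-s}=F(N)p_N^{-s}+\sum_{n=1}^{N-1}F(n)(p_n^{-s}-p_{n+1}^{-s}),$$
and the same identity with $f\equiv 1$ (hence $F(n)=n$) for the denominator. By the prime number theorem $p_N\sim N\log N$, so the boundary term $F(N)p_N^{-s}=O(N^{1-s}(\log N)^{-s})$ vanishes for each $s>1$, yielding the telescoped forms
$$\sum_{n\geq 1}f(n)p_n^{-s}=\sum_{n\geq 1}F(n)(p_n^{-s}-p_{n+1}^{-s}),\qquad \sum_{n\geq 1}p_n^{-s}=\sum_{n\geq 1}n(p_n^{-s}-p_{n+1}^{-s}),$$
the second identity being a short telescoping check.

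Define the positive weights $w_n(s):=n(p_n^{-s}-p_{n+1}^{-s})$. The conjectured ratio becomes a weighted average of the sequence $F(n)/n$:
$$\frac{\sum_{n\geq 1}f(n)p_n^{-s}}{\sum_{n\geq 1}p_n^{-s}}=\frac{\sum_{n\geq 1}(F(n)/n)\,w_n(s)}{\sum_{n\geq 1}w_n(s)}.$$
The plan is then to show that as $s\to 1^+$ the weights escape to infinity, transferring the Ces\`aro limit $L_f$ of $F(n)/n$ to this weighted mean. Given $\varepsilon>0$, choose $N_0$ with $|F(n)/n-L_f|<\varepsilon$ for $n\geq N_0$; splitting at $N_0$ and using positivity of the weights,
$$\left|\frac{\sum_n(F(n)/n)w_n(s)}{\sum_n w_n(s)}-L_f\right|\leq \frac{C(N_0)}{\sum_n w_n(s)}+\varepsilon,$$
where $C(N_0)$ is a uniform bound on the head $\sum_{n<N_0}|F(n)/n-L_f|w_n(s)$ for $s\in(1,2]$; uniformity is immediate because each $w_n(s)$ has a finite limit $n(p_{n+1}-p_n)/(p_np_{n+1})$ as $s\to 1^+$. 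The denominator satisfies $\sum_n w_n(s)=\sum_p p^{-s}\sim\log\tfrac{1}{s-1}\to\infty$ by the classical Mertens-type estimate (derived from $\log\zeta(s)=\sum_p p^{-s}+O(1)$ and $\zeta(s)\sim 1/(s-1)$), so the first term vanishes. Taking $\limsup_{s\to 1^+}$ and then $\varepsilon\to 0$ closes the argument.

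\textbf{Main obstacle.} The delicate point is rigorously justifying the Abel-summation rearrangement when $f$ is not sign-definite, since the original series $\sum_n f(n)p_n^{-s}$ need not converge absolutely. The telescoped form $\sum_n F(n)(p_n^{-s}-p_{n+1}^{-s})$ is better behaved because the differences have average size $s(p_{n+1}-p_n)p_n^{-s-1}$, which combined with $F(n)=O(n)$ yields an absolutely convergent series for $s>1$ \emph{on average} — but pointwise control is muddied by sporadically large prime gaps. The cleanest fix I would try is to smooth out the discreteness by rewriting both Dirichlet sums as Riemann--Stieltjes integrals against $\pi(x)$ and an interpolated $F(x)$, which would simultaneously embed Conjecture \ref{conj1} in the Frobenius/Abelian framework of \cite{Robert_abelian} that motivated it. A secondary subtlety is what ``$\lim_{s\to 1^+}$'' should mean when $L_f=0$ (does the numerator decay faster than $\log\tfrac{1}{s-1}$ grows, or only as fast?); the weighted-average argument handles this gracefully, producing limit zero without any extra hypothesis.
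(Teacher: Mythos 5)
The paper does not prove this statement: it is offered only as a conjecture, motivated by the ``supernormal'' analogies of Table~\ref{table1} with the Frobenius/Abelian theorem~\eqref{Frobeq}. The only verification the authors supply is the special case where $f$ is the indicator function of a set $S\subseteq\mathbb N$, in which case $L_f=d(S)$ and the conjectured identity collapses to the classical fact that Dirichlet density equals natural density for the corresponding set of primes $A(S)$. There is therefore no proof in the paper to compare yours against; what you have written is a proposed proof of a statement the authors left open.

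That said, your plan is essentially a complete and correct proof, and the ``main obstacle'' you flag is not actually an obstacle. You worry that the Abel-summation rearrangement is delicate because $\sum_n f(n)p_n^{-s}$ need not converge absolutely. But the telescoped series $\sum_n F(n)\bigl(p_n^{-s}-p_{n+1}^{-s}\bigr)$ converges \emph{absolutely} for every $s>1$, for exactly the reason embedded in your own setup: $p_n^{-s}-p_{n+1}^{-s}>0$, $|F(n)|\leq Cn$ by hypothesis, and your identity $\sum_n n\bigl(p_n^{-s}-p_{n+1}^{-s}\bigr)=\sum_n p_n^{-s}<\infty$ therefore bounds $\sum_n|F(n)|\bigl(p_n^{-s}-p_{n+1}^{-s}\bigr)\leq C\sum_n p_n^{-s}$. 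No pointwise control of prime gaps is needed; the positivity and telescoping do all the work. Once the telescoped series converges absolutely and the boundary term $F(N)p_N^{-s}\to 0$ (which you establish from $p_N\sim N\log N$), the partial sums of $\sum_n f(n)p_n^{-s}$ converge to the same value, so the rearrangement is fully rigorous and no Riemann--Stieltjes smoothing is required. The rest of your argument is a clean Toeplitz/Silverman-type transfer: the weights $w_n(s)$ are positive, each has a finite limit as $s\to1^+$ so the head $\sum_{n<N_0}$ stays bounded, and $\sum_n w_n(s)=\sum_p p^{-s}\sim\log\frac{1}{s-1}\to\infty$ by Mertens, so the tail dominates and inherits the Ces\`aro limit $L_f$. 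This would promote Conjecture~\ref{conj1} to a theorem, subsuming the paper's indicator-function check as a special case; I would just recommend stating explicitly that the absolute convergence of the telescoped sum is what licenses the rearrangement, rather than presenting it as a remaining difficulty.
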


If $f(n)$ is the indicator function of the set $S\subseteq \mathbb N$, and $A=A(S)\subseteq \mathbb P$ is as defined in \eqref{Adef}, then $L_f=d(S)=d^*(A)$. In this case, Conjecture \ref{conj1} reduces to the definition of the {\it Dirichlet density} of $A\subseteq \mathbb P$, which is well known to equal the natural density $d^*(A)$:
\begin{equation*}\lim_{s\to 1^+}\  \frac{\sum_{p\in A} p^{-s}}{\sum_{p\in \mathbb P} p^{-s}}\  =\  d^*(A).\end{equation*}

\begin{remark} A related conjecture is that $\sum_{n\geq 1}a_n q^n$  $\sim \sum_{n\geq 1} b_n q^n$ as $q\to 1^-$ if and only if $\sum_p a_{\operatorname{idx}(p)}p^{-s}$ $\sim \sum_p b_{\operatorname{idx}(p)}p^{-s}$ as $s\to 1^+$, based on Table \ref{table1}.
\end{remark}

Next we conjecture an ``Alladi-like'' formula in the sense of equation \eqref{Alladilike}, in analogy with the following partition-theoretic $q$-series result from the third author's paper \cite{Robert_abelian}. For $f(n)$ an arithmetic function, setting $\mu_{\mathcal P}^*(\lambda):=-\mu_{\mathcal P}(\lambda)$, it is proved in \cite{Robert_abelian} that 
\begin{flalign}\label{qAbelian}
 \lim_{q\to 1^-}\   \sum_{\lambda \in \mathcal P}\mu_{\mathcal P}^*(\lambda)f\left(\operatorname{sm}(\lambda)\right)q^{|\lambda|}\   \  =\  \  L_f. 
\end{flalign}
Using Table \ref{table1} together with \eqref{qAbelian}, we swap the limits in $q$ and $s$ as before, replace partitions with natural numbers in the indices of the series, and swap $\mu_{\mathcal P}(\lambda) \mapsto \mu(n)$ and $q^{|\lambda|}\mapsto n^{-s}$. 
Finally, $\operatorname{sm}(\lambda)$ maps to the {index} of $p_{\operatorname{min}}(n)$. 

Taking the limit as $s\to 1^+$, if the limit exists, the correspondences of Table \ref{table1} lead us to formulate an ``Alladi-like'' identity.  Note that $\mu^*(n):=-\mu(n)$, $f(\operatorname{idx}(1)):=0$, and $(f\circ \operatorname{idx})(p):=f(\operatorname{idx}(p))$. 


\begin{conjecture}\label{conj2}
For $f(n)$ an arithmetic function, if the limit of the average value $L_f=\lim_{N\to \infty}\frac{1}{N}\sum_{k=1}^{N} f(k)$ exists, then 
\begin{flalign*}
\sum_{n\geq 2}\frac{\mu^*(n) \left(f\circ \operatorname{idx}\right)\left(p_{\operatorname{min}}(n)\right)}{n}\  \  =\  \  L_f.\end{flalign*}
\end{conjecture}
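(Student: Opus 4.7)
The plan is to derive Conjecture \ref{conj2} as a corollary of the weighted form of the Kural-McDonald-Sah identity, combined with the prime-index re-indexing used in the proof of Lemma \ref{lem1}. Equation \eqref{eqn} is only the indicator-function special case of the main theorem of \cite{K20}: for any bounded $g:\mathbb{P}\to\mathbb{C}$ whose prime average
\[
L_g\  :=\  \lim_{x\to\infty}\frac{1}{\pi(x)}\sum_{p\leq x}g(p)
\]
exists, one has the weighted Alladi-like identity
\[
\sum_{n\geq 2}\frac{\mu^*(n)\,g(p_{\operatorname{min}}(n))}{n}\  =\  L_g.
\]
The indicator choice $g=\mathbf{1}_A$ recovers \eqref{eqn} exactly. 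I would adopt this weighted identity as the starting point.

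The second step is to apply it with the specific weight $g(p):=f(\operatorname{idx}(p))$, which is bounded whenever $f$ is (and vanishes at $p=1$ under the convention $f(0)=0$). The left-hand side of the weighted KMS formula then becomes, verbatim, the series in Conjecture \ref{conj2}, so it remains only to identify $L_g$ with $L_f$. This is done by the same re-indexing that drives Lemma \ref{lem1}: listing the primes up to $x$ in order as $p_1,p_2,\ldots,p_{\pi(x)}$, we have
\[
\frac{1}{\pi(x)}\sum_{p\leq x}f(\operatorname{idx}(p))\  =\  \frac{1}{\pi(x)}\sum_{n=1}^{\pi(x)}f(n),
\]
and since $\pi(x)\to\infty$ while the Ces\`aro mean of $f$ converges to $L_f$ by hypothesis, we obtain $L_g=L_f$, finishing the proof.

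The main obstacle is confirming, or reproving, the weighted Kural-McDonald-Sah statement in the generality we need, since the excerpt only cites the indicator case. If the weighted version is not already explicit in \cite{K20}, the natural route is to approximate bounded $g$ uniformly by finite $\mathbb{C}$-linear combinations $\sum_j c_j\mathbf{1}_{A_j}$ with each $A_j\subseteq\mathbb{P}$ possessing a natural density, apply Theorem \ref{thm1} (equivalently \eqref{eqn}) termwise, and pass to the uniform limit; the delicate point is justifying this interchange, which requires control on partial sums $\sum_{n\leq X}\mu^*(n)\mathbf{1}_A(p_{\operatorname{min}}(n))/n$ uniform in $A$, i.e., the sieve-theoretic core of Alladi's original argument. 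A secondary subtlety is that Conjecture \ref{conj2} does not explicitly assume $f$ is bounded; to make the argument airtight one would either add a boundedness hypothesis (which still covers the densities motivating the conjecture) or use a truncation argument leveraging the Ces\`aro hypothesis to tame the tail.
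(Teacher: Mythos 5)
The statement you are addressing is labeled a \emph{conjecture} in the paper; the paper offers no proof of it. It is reached purely by the ``supernormal'' analogy machinery: one takes the third author's partition-theoretic Abelian theorem \eqref{qAbelian}, applies the substitutions of Table~\ref{table1} ($\mu_{\mathcal P}\mapsto\mu$, $q^{|\lambda|}\mapsto n^{-s}$, $\operatorname{sm}(\lambda)\mapsto\operatorname{idx}(p_{\operatorname{min}}(n))$), and sends $s\to1^+$. The only corroboration the paper supplies is the observation (stated in the paragraph after the conjecture) that when $f$ is the indicator function of $S\subseteq\mathbb N$ the conjectured identity reduces exactly to Theorem~\ref{thm1}. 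Your proposal is therefore doing something different in kind: you are attempting an actual proof, not reconstructing a heuristic derivation.

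As a proof strategy, your plan is the natural one and mirrors how the paper does prove Theorem~\ref{thm1} from the indicator case \eqref{eqn} plus Lemma~\ref{lem1}. The re-indexing step is correct: if $g(p):=f(\operatorname{idx}(p))$, then $\frac{1}{\pi(x)}\sum_{p\leq x}g(p)=\frac{1}{\pi(x)}\sum_{n=1}^{\pi(x)}f(n)\to L_f$, which is exactly the Lemma~\ref{lem1} argument. The gap you identify, however, is real and is not resolved anywhere in the paper or in \cite{K20}: equation \eqref{eqn} and the Kural--McDonald--Sah theorem are stated only for indicator functions of sets $A\subseteq\mathbb P$ admitting a natural density, and a ``weighted'' version for general bounded $g$ with a prime Ces\`aro mean does not follow formally from the indicator case. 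The Alladi-type series converge only conditionally, so the approximation-by-indicators argument you sketch requires uniform-in-$A$ control of the partial sums $\sum_{n\leq X}\mu^*(n)\mathbf 1_A(p_{\operatorname{min}}(n))/n$ --- in effect a reproof of the sieve core of \cite{Alladi} and \cite{K20} with explicit uniformity, which is nontrivial and which you have not carried out. The secondary issue you raise is also genuine: the conjecture as stated does not assume $f$ bounded, and for unbounded $f$ with a Ces\`aro mean the series need not even converge absolutely, so some truncation or growth hypothesis would have to be added. In short, your outline is a plausible and potentially valuable route to turning this conjecture into a theorem, but as written it is an incomplete sketch with an explicitly acknowledged hole, not a proof.
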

Again, if $f(n)$ is the indicator function of $S\subseteq \mathbb N$, then $L_f=d(S)$ and Conjecture \ref{conj2} reduces to Theorem \ref{thm1}.

    \section*{Acknowledgments}
%

The first author was partially supported by an AMS-Simons Travel Grant and an internal grant from the University of Texas at Tyler. The second author was partially supported by the Research and Training Group grant DMS-1344994 funded by the National Science Foundation.

\end{document}